\newcommand{\tr}{\mbox{tr}}
\newcommand{\cc}{\mathbb{C}}
\newcommand{\cir}{\,\text{circ}}
\title{Representations of group rings and groups}
\author{ 
Ted
 Hurley\footnote{National Universiy of Ireland Galway, email:
 Ted.Hurley@NuiGalway.ie }}
\date{}
\begin{document}

\maketitle

%\begin{centering} 

%CONFIDENTIAL

\begin{abstract}   An isomorphism between the group ring of a finite
  group and a ring of certain block diagonal matrices is established.  
The group ring $RG$ of a finite group $G$ is %known to be 
isomorphic to the set of {\em group ring matrices} over $R$. % in a
                                % precise manner.  
It is shown  that for any group
ring matrix $A$ of $\cc G$ there exists a
matrix $P$ (independent of the entries of $A$) such that $P^{-1}AP= \diag(T_1,T_2,\ldots,
T_r)$ for block matrices $T_i$ of fixed size $s_i\ti s_i$ where $r$ is the
number of conjugacy classes of $G$ and $s_i$ are the ranks of the
group ring matrices of the primitive idempotents. %The matrix $U$ is fixed relative to $G$. %The
 %matrix $U$ may be made unitary.  
Using the isomorphism of the group ring to the ring of group ring
matrices followed by the mapping $A\mapsto P^{-1}AP$  (where $P$ is of
course fixed) gives an isomorphism from the group ring to the ring of
such block matrices. Specialising to the group elements gives a faithful 
representation of the group. Other 
representations of $G$ may be derived using the blocks in  the  
images of the group elements. 

Examples are given demonstrating how interesting and useful 
representations of groups 
can be derived using  the method. % of the isomorphism.
%A connection with the representation theory of $G$ is established. 

%Similar results hold over some other fields. % $\cc$. %% of order $n$ are

%The special case for abelian groups may be arrived at independently. 
For a  finite abelian group $Q$
 an explicit  matrix $P$ is given which diagonalises any  group 
ring matrix of $\cc Q$. The matrix $P$ is defined directly in terms of
roots of unity depending only on an expression for $Q$ as a product of
cyclic groups.   
The characters and character table of $Q$
 may be read off directly from the rows of the diagonalising 
matrix $P$. %% This is a
%%  special case of the general block diagonalisation process 
%% but is arrived at  independently.
This  has applications %% to the Fourier
%% Transform and the Convolution Theorem of 
to signal processing and generalises the cyclic group case.   
%% Circulant   
%% matrices are the group ring matrices of the cyclic group and the Fourier matrix 
%% diagonalises  any circulant matrix. 

%% the circulant $n\ti n$ matrices; M
%% the characters of the cyclic
%% group and its character table may be read off directly from the rows
%% of the Fourier matrix.

%More generally 

\end{abstract}
%Fourier matrices diagonalise any circulant matrix. This is the basis
%for finite Fourier transforms as the diagonising matrix is independent
%of the particular matrix.
\section{Introduction} 

For background on groups and 
group rings, including information on conjugacy classes and 
representation theory, 
 see \cite{seh},  and for %background
group ring matrices see \cite{hur3}. Further information on
 representation theory and character theory may be found in 
\cite{curtis} and/or \cite{d2n}.  
Results are given over 
the complex numbers $\cc$ but many of the 
results  hold over other suitably chosen fields.%%  in which an appropriate 
%% $n^{th}$ root of unity exists.
 
A matrix $A$ is said to be {\em diagonalised by $P$} if $P^{-1}AP=D$ where
$D$ is a diagonal matrix. 
A circulant matrix can be 
diagonalised by the Fourier matrix of the same size. %% This simultaneous
%% diagonalisation is the basis
%% of the (finite) Fourier transform and the convolution theorem.
The diagonalising Fourier 
matrix is independent of the particular circulant 
matrix; this is the basis
for the finite Fourier transform and the convolution theorem, 
see for example \cite{blahut}.
The Fourier $n\ti n$ matrix satisfies $FF^*=nI_n$, (and is thus a complex
Hadamard matrix)  and when
the rows are labelled by $\{1,g,g^2,\ldots,g^{n-1}\}$, it gives the
characters and character table of the cyclic group $C_n$ generated by $g$.
The ring of  circulant matrices over $R$ is
isomorphic to the ring of {\it group ring matrices} over $R$ of the
cyclic group, see for example \cite{hur3}. 
% These results for circulant matrices, which are ring isomorphic to 
% the group ring matrices of the cyclic group, may be generalised to the
% group ring matrices of a finite abelian group. 

The group ring
of a finite group is isomorphic to the ring  of group ring
matrices as determined in \cite{hur3}. 
The group ring matrices are types of matrices determined by their
first rows; %% and the multiplication of the group elements in a specific
%% manner, \cite{hur3}
 see section \ref{sec1} below for precise formulation.
For example circulant matrices are the group ring matrices of the cyclic
group and matrices of the form  $\begin{pmatrix} A&B
		       \\ B\T&A\T\end{pmatrix}$, where $A,B$ are
                       circulant matrices, are  
determined by their first rows and correspond to the group 
ring matrices  of the dihedral group. See Sections \ref{sec1},\ref{cases} and
\ref{sec2} below for further examples.  

Group rings and group ring matrices will be over $\cc$ unless
otherwise stated. Results may hold over other fields but these are not
dealt with here. 

An isomorphism
from the ring of group ring matrices of a finite group $G$ into certain block
diagonal matrices is established. %% In particular this establishes an
%% isomorphism 
%% between the group 
%% ring and the ring of these block matrices.
More precisely it is shown that for a group ring matrix $A$ 
 of a finite group $G$ there exists a
matrix $P$ (independent of the particular $A$) 
such that $P^{-1}AP= \diag(T_1,T_2,\ldots,
T_r)$ for block matrices $T_i$ of fixed size $s_i\ti s_i$ where $r$ is the
number of conjugacy classes of $G$ and the $s_i$ are the ranks of the
group ring matrices of the primitive idempotents. Thus the group ring
$\cc G$  is isomorphic to 
matrices of the type $\diag (T_1,T_2,\ldots, T_r)$. A faithful representation
of the group itself may be given by taking images of the group elements.
%in the isomorphism. % By
% specialising to the 
% group $G$, it may be represented faithfully such block diagonal
% matrices.
 Other representations of 
 $G$ may be obtained using the blocks in the images of the of the
 group elements. %% by mapping an element $g$ of $G$ to  a (direct)
%%  sum of one or more 
%% such blocks which occur in the image of $g$ in the isomorphism. 

See Sections \ref{cases} and \ref{sec2} below for applications
and examples; these show how interesting and useful representations of the
groups, and group rings,  may be derived by the method.
% $\{T_1,T_2,\ldots,T_r\}$.
%% The matrix $U$ may be made unitary. A
%% connection with representation theory is established. 

%% It is shown that 
%%  block circulant matrices can then be simultaneously 
%% diagonalised by a type of block Fourier
%%  matrix provided the blocks can be simultaneoulsy diagonalised. 
The finite abelian group ring  
is a special case  
%% of the general block diagonalisation 
%% process of Section \ref{block}
but is dealt with
independently in Section \ref{sec2} as more direct information and
direct calculations may be made. The diagonalising matrix is 
obtained directly from Fourier type matrices, 
the diagonal entries are obtained  
from the entries of
the first row of the group ring matrix  and the
character table may be read off from the diagonalising matrix.  
%Section \ref{sec2} on abelian groups is independent of Section \ref{block}.

More precisely, for a
given finite abelian group $H$ it is shown explicitly 
that  exists a matrix $P$ such that
$P^{-1}BP$ is diagonal where $B$ is any group ring matrix of $H$. The
matrix $P$ is independent of the entries of the particular group ring
matrix $B$ and  the diagonal entries are given precisely in terms
of the entries of the first row of $B$. The matrix $P$ may be chosen
so that $PP^*=nI_n$ and
when the rows of $P$ are labelled 
appropriately to the structure of the group as a product of cyclic
groups, then the rows of $P$ give the characters and character table
of $H$.  %% The abelian
%% group case may be considered as a special case of the general case but
%% in addition the results are given directly and show explicitly how the
%% characters and character tables of the abelian group may be obtained
%% from the diagonalising matrix.

%% In signal processing the Fourier transform is applied 
%% to a vector which may then considered as the first row (or first column)
%% of a circulant matrix.  {\em Convolution of vectors} can be considered 
%% as matrix multiplication of the circulants 
%% and  these can be 
%% simultaneously diagonalised by the Fourier matrix which allows for easy
%% computation (and FFT, Fast Fourier Transform) of the convolution.
  %% This

Many results for circulant $n\ti n $ 
matrices (= group ring matrices over the cyclic group $C_n$) 
hold not just over $\cc$ but over any field $F$ which contains a
primitive  $n^{th}$ root of unity.  %%  in particular in this case the
%% characteristic of $F$  does not divide $n$.
Similarly some  results here    
hold over  fields other than $\cc$ but this aspect is not dealt with here.
%% When the
%% Fourier matrix is normalised it becomes a unitary matrix.

The idea of using group ring matrices of 
complete orthogonal sets of idempotents originated
in \cite{hur5} where these are used in the study and construction of 
 types of multidimensional paraunitary matrices.

%% Note that a set of matrices, each of which can be diagonalised, can then be 
%% simultaneously diagonalised if and only if the matrices in the set commute. 
\section{Group ring matrices}\label{sec1}
%The Fourier matrices and their generalisations 

Certain classes of 
matrices are determined by their first row or
column. A particular type of such matrices are those corresponding to
group rings. It is shown in \cite{hur3} that the group ring $RG$ where
$|G|=n$ may be embedded in the ring of ${n\ti n}$ matrices over $R$ in
a precise manner. 
%The results obtained mimic that of representation theory. 

Let $\{g_1, g_2, \ldots , g_n\}$ be a fixed listing of the elements of $G$.
Consider the following matrix:

$\left(\begin{array}{lllll}
{g_1^{-1}g_1} & {g_1^{-1}g_2} & {g_1^{-1}g_3} 
 &  \ldots & {g_1^{-1}g_n} \\

{g_2^{-1}g_1} & {g_2^{-1}g_2} & {g_2^{-1}g_3} 
 &  \ldots & {g_2^{-1}g_n} \\ 

\vdots & \vdots & \vdots &\vdots &\vdots \\

{g_n^{-1}g_1} & {g_n^{-1}g_2} &{g_n^{-1}g_3} 
 &  \ldots & {g_n^{-1}g_n} 
%\ldots & \ldots & \ldots & \alpha_0 & \ldots 
\end{array}\right)$

Call this the {\em matrix of $G$} (relative to this listing)  and denote
it by $M(G)$. 
%% Its entries are elements of $G$ and it has some 
%% interesting properties.
%% Every row and every column contains the
%% elements of $G$ in some order.
%% \footnote{Also $M(G)*M(G) = nM(G)$ and hence
%% $\frac{1}{n}M(G)*\frac{1}{n}M(G) = \frac{1}{n}M(G)$ gives an
%%   `idempotent' when $n$ is invertible.}  
%\subsection
\subsection{$RG$-matrix} 

 Given a listing of the elements of $G$, form the
matrix $M(G)$ 
 of $G$ relative to this listing. %Let $R$ be a ring. 
An {\em  $RG$-matrix} over a ring $R$ is a matrix obtained by substituting elements of $R$ for 
 the elements of $G$ in $M(G)$. If $w\in RG$ and $w=\di\sum_{i=1}^n
\al_ig_i$ then $\sigma(w)$ is the $n\ti n$ $RG$-matrix obtained by
substituting each $\al_i$ for $g_i$ in the group matrix.
 
Precisely $\sigma(w)= \left(\begin{array}{lllll}
\alpha_{g_1^{-1}g_1} & \alpha_{g_1^{-1}g_2} &\alpha_{g_1^{-1}g_3} 
 &  \ldots & \alpha_{g_1^{-1}g_n} \\

\alpha_{g_2^{-1}g_1} & \alpha_{g_2^{-1}g_2} &\alpha_{g_2^{-1}g_3} 
 &  \ldots & \alpha_{g_2^{-1}g_n} \\ 

\vdots & \vdots & \vdots &\vdots &\vdots \\

\alpha_{g_n^{-1}g_1} & \alpha_{g_n^{-1}g_2} &\alpha_{g_n^{-1}g_3} 
 &  \ldots & \alpha_{g_n^{-1}g_n} 
%\ldots & \ldots & \ldots & \alpha_0 & \ldots 
\end{array}\right) $

It is shown in \cite{hur3} that $w\mapsto \sigma(w)$ gives an isomorphism
of the group ring $RG$ into the ring of $n\ti n$ matrices over $R$. %%  so that if two entries in $M(G)$ are
 %% equal as group elements then the corresponding entries in the $RG$-matrix
 %% are  equal.
%% \footnote{Note that the matrix $M(G)$ of $G$ is not the
%%   multiplication  table of $G$ but is proving to have nicer properties. 
%We could indeed, given an element
%of $RG$ form a matrix of the elements from the multiplication table
%but this is not the same. Note however that $\hat{M}(G,w)$ is not the
%% The multiplication table for $G$ cannot
%% be obtained from $M(G)$ by reordering the elements of $G$.}. 

Given the entries of the first row of an  $RG$-matrix 
the entries of the other rows are determined from the matrix $M(G)$ of
$G$. % ; each row and each column is a permutation of the first row
% determined by the matrix of $G$.

An $RG$-matrix is a matrix corresponding
to a group ring element in the isomorphism from the group ring
into the ring of $R_{n\times n}$  matrices. The
isomorphism depends on the listing of the elements of $G$. For example 
if $G$ is cyclic, an $RG$-matrix is a circulant matrix 
relevant to the natural listing $G=\{1,g,g^2,\ldots, g^{n-1}\}$ where $G$ is
generated by $g$. An $RG$-matrix when $G$ is dihedral is one of the form
  $\begin{pmatrix} A&B \\ B&A\end{pmatrix}$ where $A$ is circulant and
  $B$ is reverse circulant but also one of the form $\begin{pmatrix} A&B
						     \\
						     B\T&A\T\end{pmatrix}$
						     where both $A,B$
						     are circulant, in
  a different listing of $G$.  
Other examples are given within \cite{hur3}.

%\paragraph{Notation} 
In general given a group ring element $w$, and a fixed  listing of the
elements of the group,  
the corresponding capital letter $W$ is often used to denote the image of  $w$,
$\sigma(w)$, in the ring of $RG$-matrices.
%as described. 

\subsubsection*{Listing}
Changing the listing of the elements of the group  gives an equivalent $RG$-matrix
and  one is obtained from the
other by a sequence of processes consisting of interchanging two rows and then
interchanging the corresponding two columns.

%% The simultaneous diagonalisation of
%% a set of non-commuting matrices does not arise.  
\section{Block diagonal}\label{block}
Matrices, when diagonalisable, may be simultaneously diagonalised if
and only if they commute. 
However a set of matrices may be simultaneously block diagonalisable
in the sense that there exist a matrix $U$ such that $U^{-1}AU$ has
the form $\diag(T_1,T_2,\ldots, T_r)$, where each $T_i$ is of fixed
$r_i\ti r_i$ size, for every matrix $A$ in the set -- and for $U$
independent of $A$. This is the case for group ring matrices.

Idempotents will naturally play an important part. (See \cite{hur5}
where these are used for paraunitary matrices.) Say $e$ is an
idempotent in a ring $R$ if $e^2=e$ and say $\{e,f\}$ are orthogonal if
$ef=0=fe$. Say $\{e_1,e_2, \ldots, e_k\}$ is a complete orthogonal set of
idempotents in a ring $R$ if $e_i^2=e_i, e_ie_j= 0$ for $i\neq j$ and
$e_1+e_2+\ldots+ e_k =1$ where $1 $ is the identity of $R$. Now $\tr
A$ denotes the trace of a matrix $A$.  

\begin{proposition}\label{gr1} Suppose $\{e_1,e_2,\ldots,e_k\}$ is a
  complete orthogonal set of idempotents. Consider
 $w= \al_1 e_1 + \al_2 e_2 + \ldots +
\al_ke_k$ with $\al_i \in F$, a field. Then $w$
 is invertible if and only if each $\al_i \neq 0$ 
and in this case $w^{-1}
 = {\al_1}^{-1}e_1 + {\al_2}^{-1}e_2+ \ldots + {\al_k}^{-1}e_k$.
\end{proposition}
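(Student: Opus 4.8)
The plan is to reduce everything to one elementary computation, namely how products behave on the $F$-span of $\{e_1,\ldots,e_k\}$, and then read off both the invertibility criterion and the inverse formula. For $w=\al_1e_1+\cdots+\al_ke_k$ and any $v=\beta_1e_1+\cdots+\beta_ke_k$, expanding the product and using $e_i^2=e_i$ together with $e_ie_j=0$ for $i\neq j$ collapses all the cross terms, giving $wv=vw=\al_1\beta_1e_1+\cdots+\al_k\beta_ke_k$. In particular any two such elements commute, and taking all $\beta_i=1$ recovers $w\cdot 1=w$ via $e_1+\cdots+e_k=1$. This single identity is the only real content; the rest is bookkeeping.

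For the ``if'' direction, assume each $\al_i\neq 0$ and put $v=\al_1^{-1}e_1+\cdots+\al_k^{-1}e_k$. The multiplication rule gives $wv=\al_1\al_1^{-1}e_1+\cdots+\al_k\al_k^{-1}e_k=e_1+\cdots+e_k=1$, and symmetrically $vw=1$, so $w$ is invertible with $w^{-1}=v$, which is exactly the stated formula. For the converse, suppose $w$ is invertible and that $\al_j=0$ for some $j$. Then $we_j=\al_je_j=0$ by the multiplication rule, and multiplying on the left by $w^{-1}$ forces $e_j=0$, contradicting the standing assumption that the idempotents making up a complete orthogonal set are nonzero. Hence every $\al_i\neq 0$.

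I do not anticipate a genuine obstacle; the one point requiring care is that the equivalence presupposes each $e_i\neq 0$, since otherwise a vanishing idempotent would make its coefficient $\al_i$ irrelevant to $w$ and the ``only if'' half would fail. An equivalent and perhaps cleaner packaging is to note that $w\mapsto(\al_1,\ldots,\al_k)$ is a ring isomorphism from the $F$-subalgebra spanned by $e_1,\ldots,e_k$ onto the product ring $F\times\cdots\times F$ ($k$ factors), injectivity being one more application of ``multiply by $e_j$, conclude $\al_j=0$''; the proposition is then just the standard description of the units of $F^k$ pulled back along this isomorphism.
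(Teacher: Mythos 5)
Your proof is correct and follows essentially the same route as the paper: verify that $w\cdot(\al_1^{-1}e_1+\cdots+\al_k^{-1}e_k)=e_1+\cdots+e_k=1$ using orthogonality, and for the converse observe that $\al_j=0$ forces $we_j=0$, which is incompatible with invertibility since $e_j\neq 0$. Your explicit remark that the argument presupposes each $e_i\neq 0$ matches the paper's parenthetical ``(non-zero) zero-divisor''; the $F^k$-isomorphism repackaging is a nice extra but not needed.
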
 

\begin{proof} Suppose each $\al_i \neq 0$.
Then $w*({\al_1}^{-1}e_1+{\al_2}^{-1}e_2+ \ldots + {\al_k}^{-1}e_k)
 = e_1^2 + e_2^2 + \ldots + e_k^2 = e_1+e_2+\ldots + e_k = 1$.

Suppose $w$ is invertible and that some $\al_i=0$. 
Then $we_i =0$ and so $w$ is a (non-zero) zero-divisor and is not invertible.
\end{proof}

\begin{lemma}\label{trrank} Let $\{E_1,E_2, \ldots, E_s\}$ be a
set of orthogonal idempotent matrices. Then $\rank
(E_1+E_2 +\ldots + E_s) = \tr (E_1+E_2+ \ldots + E_s) = \tr E_1+ \tr
 E_2+ \ldots + \tr E_s = \rank E_1+ \rank E_2 + \ldots +\rank
E_s$.
% or more generally $\rank (E_{i_1} + E_{i_2} + \ldots E_{i_s}) =
%\rank E_{i_1} +\rank E_{i_2}+ \ldots + \rank E_{i_s}$ 
\end{lemma}
\begin{proof}
It is known that $\rank A = \tr A$ for an idempotent matrix, see
for example \cite{idemrank}, and so
$\rank E_i = \tr E_i$ for each $i$. If $\{E,F,G\}$ is a set of orthogonal
 idempotent matrices so is  $\{E+F,G\}$. From this it follows that $\rank
(E_1+E_2 +\ldots + E_s) = \tr (E_1+E_2+ \ldots E_s)= \tr E_1+\tr E_2 +
 \ldots + \tr E_s = \rank E_1+ \rank E_2 + \ldots +\rank
E_s$.
\end{proof}
\begin{corollary}\label{trrank1}
$\rank(E_{i_1}+ E_{i_2}+ \ldots + E_{i_k})= 
\rank E_{i_1} +\rank E_{i_2}+ \ldots + \rank E_{i_k}$ for $i_j \in \{
1,2,\ldots, s\}$, $i_j\neq i_l$ for $j\neq l$.
\end{corollary}

Let $A= a_1 E_1 + a_2 E_2 + \ldots + a_kE_k$ for a complete set of 
idempotent orthogonal
 matrices $E_i$. Then $A$
 is invertible if and only if each $a_i \neq 0$ and in this case $A^{-1}
 = a_1^{-1}E_1 + {a_2}^{-1}E_2+ \ldots + {a_k}^{-1}E_k$. 
%\end{corollary}
This is a special case of the following.  

\begin{proposition}{\label{det2}} Suppose $\{E_1, E_2, \ldots, E_k\}$ is a
 complete symmetric orthogonal set of idempotents in $F_{n\ti n}$. 
Let $Q= a_1 E_1 + a_2 E_2 + \ldots +
 a_kE_k$.  Then the determinant of $Q$ is 
$|Q| = a_1^{\rank E_1}a_2^{\rank E_2}\ldots a_k^{\rank E_k}$.
\end{proposition}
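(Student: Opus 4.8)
The plan is to reduce the computation of $|Q|$ to a simultaneous block-diagonal form induced by the idempotents $E_1,\dots,E_k$. First I would use the fact that a complete symmetric orthogonal set of idempotents in $F_{n\ti n}$ decomposes $F^n$ into an orthogonal direct sum $V_1\oplus V_2\oplus\cdots\oplus V_k$, where $V_i=\operatorname{im}E_i$ and $\dim V_i=\rank E_i$ (using $\rank E_i=\tr E_i$ from Lemma~\ref{trrank}); symmetry of the $E_i$ guarantees this sum is orthogonal, so one may pick an orthonormal basis adapted to the decomposition and assemble it into an orthogonal (or unitary) matrix $P$ with $P^{-1}E_iP=\diag(0,\dots,0,I_{\rank E_i},0,\dots,0)$, the identity block sitting in the $i$-th slot. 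Conjugation is a ring homomorphism and is determinant-preserving, so it suffices to prove the claim for $P^{-1}QP$.

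Next, since $Q=\sum_i a_iE_i$, applying $P^{-1}(\cdot)P$ gives $P^{-1}QP=\diag(a_1 I_{\rank E_1},\,a_2 I_{\rank E_2},\dots,\,a_k I_{\rank E_k})$, a genuine diagonal matrix whose diagonal lists $a_i$ with multiplicity $\rank E_i$. Its determinant is therefore $\prod_{i=1}^k a_i^{\rank E_i}$, and because $|Q|=|P^{-1}QP|$ the result follows. The earlier remark that $Q$ is invertible iff each $a_i\neq 0$ (Proposition~\ref{gr1} transported to matrices) is recovered as the special case asking when this product is nonzero, and the inverse formula likewise drops out of the diagonal picture.

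The main obstacle I anticipate is justifying cleanly that a \emph{symmetric} complete orthogonal family can be conjugated to the standard block idempotents by a single invertible $P$ — i.e.\ that the images $\operatorname{im}E_i$ are mutually orthogonal and span $F^n$. Over $\cc$ one wants to be a little careful about whether ``symmetric'' is meant as $E_i^\T=E_i$ or Hermitian $E_i^*=E_i$; for the orthogonality of the ranges one really uses that $E_i$ is the orthogonal projector onto $V_i$, which needs the Hermitian condition (or, if only $E_i^\T=E_i$ is assumed, a separate argument). Once that structural fact is in hand, everything else is bookkeeping: orthogonality $E_iE_j=0$ forces $V_i\cap(\sum_{j\neq i}V_j)=0$, completeness $\sum E_i=I$ forces the $V_i$ to span, and Corollary~\ref{trrank1} (or Lemma~\ref{trrank}) confirms the dimension count $\sum\rank E_i=n$ so that the block sizes genuinely partition the $n\ti n$ matrix. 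An alternative route that sidesteps the geometry entirely is to expand the determinant via the multiplicativity $|Q|=\prod|a_iE_i+(\text{complement})|$ using that each $E_i$ is conjugate to a standard block idempotent — but the direct-sum argument above is the cleanest and is the one I would write up.
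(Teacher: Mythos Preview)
The paper states Proposition~\ref{det2} without proof, so there is nothing to compare your argument against directly. Your approach is correct and is the natural one: simultaneously conjugate the $E_i$ to the standard block idempotents, so that $P^{-1}QP=\diag(a_1 I_{\rank E_1},\dots,a_k I_{\rank E_k})$, and read off the determinant.

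One remark on the ``main obstacle'' you flag. You worry about whether ``symmetric'' should mean $E_i\T=E_i$ or $E_i^*=E_i$ in order to get an orthogonal direct sum of the ranges, but in fact neither hypothesis is needed for the determinant formula. The purely algebraic conditions $E_i^2=E_i$, $E_iE_j=0$ for $i\neq j$, and $\sum_i E_i=I$ already force $F^n=\bigoplus_i \operatorname{im}E_i$ as a direct sum of vector spaces: for any $v$ one has $v=\sum_i E_iv$, and $E_j(E_iv)=0$ for $j\neq i$ shows the summands meet only in $0$. Any basis adapted to this decomposition gives an invertible (not necessarily unitary) $P$ with $P^{-1}E_iP$ equal to the $i$-th standard block idempotent, and your computation of $|Q|$ goes through verbatim. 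So the symmetry hypothesis in the statement is actually superfluous for this particular proposition, and the issue you anticipated does not arise.
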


Let $RG$ be the group ring of a finite group $G$ over the ring $R$.
Let $\{e_1,e_2, \ldots, e_k\}$ be a
complete orthogonal set of idempotents in $RG$ and $\{E_1,E_2,
\ldots,E_k\}$ the corresponding $RG$-matrices (relevant to some listing
of the elements of $G$). Such a set of idempotents is known to exist
when $R=\cc$, the complex numbers,  and also over other fields, see
for example \cite{curtis} or \cite{seh}. We
will confine ourselves here to $\cc$ but many of the results hold over
these other fields. 
The idempotent elements from the group ring satisfy $e^*=e$  and so the
idempotent matrices  
are symmetric, $E^*=E$, and satisfy $E^2=EE^*=E$.
%Let $\{e_1, e_2, \ldots, e_k\}$ be a complete orthogonal set of idempotents
%in a vector space over $F$. %in the group ring $FG$ where $|G| = n$ and let $E_i$ be the matrix
%corresponding to $e_i$ for $i=1,2, \ldots, k$.

%Let $E_{i,j}$ denote the $j^{th}$ column of $E_i$.

%\begin{corollary}{\label{vert}}

We now specialise the $E_i$ to be $n\ti n$ matrices corresponding to
the group ring idempotents $e_i $, that is $\sigma e_i=E_i$. 
Define the rank of $e_i$ to be that of $E_i$.

Consider now the group ring $FG$ where $F=\cc$ the complex numbers and
$G$ is a finite group. As already mentioned, $FG$ contains a complete
orthogonal set of idempotents $\{e_1,e_2,\ldots, e_k\}$ which may be
taken to be primitive, \cite{seh}.  
\begin{theorem}\label{oneone} Let $A$ be a $FG$-matrix with  % where
				 $F=\cc $. 
Then there exists a
 non-singular matrix $P$ independent of $A$ such that $P^{-1} AP = T$
where $T$ is a block diagonal matrix with blocks of size $r_i \times
r_i$ for $i=1,2, \ldots, k$ and $r_i$ are the %sizes
%of 
ranks of the $e_i$.
%Moreover the matrix $P$ may be constructed from a 
 %complete orthogonal set of idempotents of $FG$ and the size of the
 %blocks are the ranks of the idempotent matrices. 
\end{theorem}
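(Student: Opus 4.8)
The plan is to exploit the fact that a complete orthogonal set of primitive idempotents $\{e_1,\dots,e_k\}$ in $\cc G$ gives, via $\sigma$, a complete orthogonal set of symmetric idempotent matrices $\{E_1,\dots,E_k\}$ with $E_1+E_2+\dots+E_k=I_n$ and $E_iE_j=0$ for $i\neq j$. Since these are symmetric idempotents, each $E_i$ is an orthogonal projection onto its column space $V_i$, and orthogonality $E_iE_j=0$ forces $V_i\perp V_j$; together with $\sum E_i=I_n$ this means $\cc^n=V_1\oplus V_2\oplus\dots\oplus V_k$ is an orthogonal direct sum, with $\dim V_i=\rank E_i=\tr E_i=r_i$ (using Lemma \ref{trrank}). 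So the first step is to choose an orthonormal basis of $\cc^n$ adapted to this decomposition: pick orthonormal bases $B_i$ of each $V_i$ and concatenate them, letting $P$ be the (unitary) matrix whose columns are $B_1,B_2,\dots,B_k$ in that order. This $P$ depends only on the idempotents $e_i$, hence only on $G$, not on $A$.

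Next I would show that this single $P$ block-diagonalises every $FG$-matrix $A$ simultaneously. The key algebraic fact is that, because $\sigma$ is a ring homomorphism and $e_i$ is central in $\cc G$ (primitive idempotents of a group algebra over $\cc$ lie in the centre, being the block identities in the Wedderburn decomposition), each $E_i$ is central in the image $\sigma(\cc G)$; in particular $AE_i=E_iA$ for every $FG$-matrix $A$. Consequently $A$ leaves each $V_i=E_i\cc^n$ invariant: if $v\in V_i$ then $v=E_iv$ and $Av=AE_iv=E_i(Av)\in V_i$. Therefore in the basis given by the columns of $P$ the matrix of $A$ is block diagonal, $P^{-1}AP=\diag(T_1,\dots,T_k)$, where $T_i$ is the $r_i\times r_i$ matrix of the restriction $A|_{V_i}$ in the chosen orthonormal basis $B_i$ of $V_i$. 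The block sizes $r_i=\dim V_i=\rank E_i$ are fixed, independent of $A$, which is exactly the assertion. (If one only wants the weaker claim that $A$ is block-upper-triangular one needs less, but centrality of the $e_i$ gives genuine block diagonality.)

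I expect the main obstacle — or at least the point most needing care — to be justifying that the $E_i$ actually commute with every $FG$-matrix $A$. This is not automatic from "complete orthogonal set of idempotents" alone; it uses that the $e_i$ can be taken \emph{primitive} and that over $\cc$ the primitive central idempotents realising the Wedderburn block decomposition $\cc G\cong\prod_i M_{n_i}(\cc)$ are central, so that $e_i w = w e_i$ for all $w\in\cc G$ and hence $E_iA=AE_i$. Alternatively, one avoids primitivity and centrality altogether by using a one-sided decomposition: write $1=e_1+\dots+e_m$ with the $e_i$ primitive (not necessarily central) orthogonal idempotents; then $\cc G=\bigoplus_i e_i\cc G$ as right modules, the matrices $E_i$ give $\cc^n=\bigoplus_i V_i$ with $V_i=E_i\cc^n$, and left multiplication by any $FG$-matrix $A$ (which corresponds to left multiplication in $\cc G$) maps $V_i=E_i\cc^n$ into itself because $AE_i=\sigma(a e_i)$ and $ae_i\in(\cc G)e_i$... — here one must be slightly more careful, so I would in fact prefer the central-idempotent route. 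Either way, once invariance $AV_i\subseteq V_i$ is secured, writing $A$ in the $P$-basis and reading off the blocks is routine, and the block sizes come out as the ranks $r_i$ of the $e_i$ by Lemma \ref{trrank} and Corollary \ref{trrank1}.

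Finally I would note one subtlety: to get $P^{-1}AP$ rather than merely a similarity over a possibly different decomposition, and to later obtain the unitary statement $PP^*=nI_n$ mentioned in the introduction, it is convenient to scale so that the basis vectors have the right normalisation; but for the bare statement of Theorem \ref{oneone} any basis adapted to $\cc^n=\bigoplus V_i$ works and $P$ need only be non-singular. So the proof skeleton is: (1) $\sigma(e_i)=E_i$ are symmetric orthogonal idempotents summing to $I_n$, giving an orthogonal decomposition of $\cc^n$ with $\dim V_i=r_i$; (2) the $e_i$ being central (primitive central idempotents over $\cc$), $E_i$ is central in $\sigma(\cc G)$, so every $FG$-matrix commutes with each $E_i$ and hence preserves each $V_i$; (3) the change-of-basis matrix $P$ with columns running through bases of $V_1,\dots,V_k$ in order satisfies $P^{-1}AP=\diag(T_1,\dots,T_k)$ with $T_i$ of size $r_i\times r_i$, and $P$ is independent of $A$.
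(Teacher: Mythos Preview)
Your proposal is correct and shares the same skeleton as the paper's proof: form the idempotent matrices $E_i=\sigma(e_i)$, decompose $\cc^n$ as the direct sum of their column spaces $V_i$ (with $\dim V_i=r_i$), and let $P$ be the matrix whose columns are bases of the $V_i$ in order. The difference lies in how the key invariance $AV_i\subseteq V_i$ is established. You invoke centrality of the primitive central idempotents to get $AE_i=E_iA$ directly, from which invariance is immediate. The paper instead argues through the $FG$-matrix structure: since $AE_i$ is itself an $FG$-matrix it is determined by its first column, that first column expands in the basis $S=\bigcup S_i$, and then multiplying on the right by $E_i$ together with orthogonality forces the expansion to involve only the $S_i$-part; finally the observation that the columns of the $FG$-matrix $V_{i,j}$ are a permutation of the columns of $E_i$ transfers this to each individual $Av_{i,j}$. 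Your route is shorter and more conceptual, and it makes explicit the hypothesis (centrality of the $e_i$) that the paper's column argument also tacitly relies on; the paper's route stays closer to explicit column manipulations, which dovetails with the constructive algorithm it records immediately afterwards and with the concrete choice of columns of $E_i$ (rather than an arbitrary basis of $V_i$) used in the examples. Your optional upgrade to orthonormal bases is exactly what the paper does next in Theorem~\ref{onetwo}.
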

\begin{proof} Let $\{e_1,e_2,\ldots, e_k\}$ be the orthogonal idempotents
  and $S=\{E_1, E_2, \ldots, E_k\}$ the group ring matrices corresponding to
  these, that is, $\sigma(e_i)=E_i$ in the embedding of the group ring
 into the $\cc G$-matrices. 
Any column of $E_i$ is orthogonal to any column of $E_j$ for $i\neq
j$ as $E_iE_j^*=0$. Now let $\rank E_i= r_i$. Then $\sum_{i=1}^k r_i= n$. 
Let $S_i=\{v_{i,1}, v_{i,2}, \ldots v_{i,r_i}\}$ be a basis for the column
space of $E_i$ consisting of a subset of the columns of
 $E_i$; do this for each $i$. Then each element of $S_i$ is orthogonal to each
element of $S_j$ for $i\neq j$. Since  $\sum_{i=1}^k r_i= n$ it follows that
$S=\{S_1, S_2, \ldots, S_k\}$ is a basis for $F^n$.  

%Suppose  determines, as first column, the $FG$-matrix
Let  $V_{i,j} $ denote the $FG$-matrix determined by the column
vector $v_{i,j}$, let $S_i(G)$ denote the set of $FG$-matrices
obtained by substituting $V_{i,j}$ for $v_{i,j}$ in $S_i$ and 
let $S(G)$ denote the set of $FG$-matrices
obtained by substituting  $S_i(G)$ for  $S_i$ in $S$. % Define $P$ to be
%  the matrix with columns $S$.  

As $S$ is a basis for $F^n$ the first column of $AE_i$ is a linear
 combination of elements from $S$. 
The first column of $AE_i$ determines $AE_i$, as 
 $AE_i$ is an $FG$-matrix, and
 hence $AE_i$ is a linear combination of elements of $S(G)$. By
 multiplying $AE_i$ through on the right by $E_i$, and orthogonality, it follows that $AE_i$
 is a linear combination of $S_i(G)=\{V_{i,1}, V_{i,2}, \ldots,
 V_{i,r_i}\}$. Now each $V_{i,j}$ consists of columns which are a
 permutation of the columns of $E_i$. Also $E_i$ contains the columns
 $S_i$. Thus equating $AE_i$ to the linear combination of $S_i(G)$
 implies that each $AV_{i,j}$ is a linear combination of $S_i$. 

% Now $E_i\in S_i$. 
% Then similarly each $AV_{i,j}$ is a linear combination of
%  elements of $S_i$.
 Let $P$ then be the matrix with columns consisting of the
 first columns $S_i$ for $i=1,2, \ldots, k$. Then  
 $AP =PA$ where $T$ is a matrix of blocks of  size $r_i\ti r_i$ arranged
 diagonally for $ i=1,2, \ldots k$. Since $P$ is invertible it follows
 that $P^{-1}AP = T$.

\end{proof}
 
The proof is constructive in the sense that the matrix $P$ is
constructed from the complete orthogonal set of idempotents. Method:
\begin{enumerate}
\item Find  complete orthogonal set of idempotents $\{e_1,e_2,\ldots,
      e_k\}$ for $FG$.
\item Construct the corresponding $FG$-matrices
      $\{E_1,E_2,\ldots,E_k\}$.
\item Find a basis $S_i$ for the column space of  $E_i$ for $1\leq 1\leq
      k$.
\item  Let $P$ be the matrix made up of columns of the union of the
       $S_i$.% , for %        $i=1,2,\ldots, k$.
\item Then $P^{-1}AP$ is a block diagonal matrix consisting of blocks of
      size $r_i\ti r_i$ where $r_i$ is the rank of $E_i$. 
\end{enumerate}
However this algorithm 
requires being able to construct  a complete orthogonal set of
idempotents. If the
matrix $P$ could be obtained directly then indeed this would be a way for the
construction of the idempotents. % which of course
% need to be obtained for the construction.

\begin{corollary} The group ring $FG$ is isomorphic to a subring of
  such block diagonal matrices. The isomorphism is given by $w\mapsto
  \sigma(w) = W \mapsto P^{-1}WP$. 
\end{corollary}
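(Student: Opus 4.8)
The plan is to obtain the claimed isomorphism as a composite of three maps whose relevant properties are already established. First, by \cite{hur3} the map $\sigma\colon FG\to F_{n\ti n}$, $w\mapsto \sigma(w)=W$, is an injective ring homomorphism, so $FG$ is isomorphic (as a ring) onto its image $\sigma(FG)$, the ring of $FG$-matrices. Second, conjugation by the fixed invertible matrix $P$ of Theorem \ref{oneone}, $\gamma\colon F_{n\ti n}\to F_{n\ti n}$, $X\mapsto P^{-1}XP$, is a ring automorphism of $F_{n\ti n}$: it is additive, it is multiplicative since $P^{-1}XYP = (P^{-1}XP)(P^{-1}YP)$, and it fixes $I_n$. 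Hence its restriction to $\sigma(FG)$ is an injective ring homomorphism.

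Next I would pin down the target. Fix the partition of $\{1,2,\ldots,n\}$ into consecutive blocks of sizes $r_1,r_2,\ldots,r_k$, where $\sum_{i=1}^k r_i=n$ by Lemma \ref{trrank} and Corollary \ref{trrank1}, and let $\mathcal{B}$ denote the set of all matrices $\diag(T_1,T_2,\ldots,T_k)$ with $T_i\in F_{r_i\ti r_i}$. One checks at once that $\mathcal{B}$ is a subring of $F_{n\ti n}$: it is closed under addition; block diagonal matrices with matching block sizes multiply block-wise, so it is closed under multiplication; and $I_n=\diag(I_{r_1},I_{r_2},\ldots,I_{r_k})\in\mathcal{B}$.

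Finally, Theorem \ref{oneone} states precisely that $P^{-1}AP\in\mathcal{B}$ for every $FG$-matrix $A$; that is, $\gamma$ maps $\sigma(FG)$ into $\mathcal{B}$. Composing, $w\mapsto \sigma(w)=W\mapsto P^{-1}WP$ is an injective ring homomorphism $FG\to\mathcal{B}$, and being injective it is an isomorphism onto its image, which is thus a subring of $\mathcal{B}$ isomorphic to $FG$. The only point that merits a word of care — and it is mild — is that the block decomposition in Theorem \ref{oneone} is the \emph{same} decomposition (same block sizes, in the same order, hence the same partition of the index set) for every $A$; this is exactly what makes all images lie in the single subring $\mathcal{B}$ rather than in varying conjugates of it, and it is immediate because $P$ does not depend on $A$.
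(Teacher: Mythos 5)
Your proposal is correct and is exactly the argument the paper intends: the paper states this corollary without proof as an immediate consequence of Theorem \ref{oneone} together with the isomorphism $\sigma$ from \cite{hur3} and the fact that conjugation by the fixed invertible $P$ is a ring homomorphism. Your write-up simply makes explicit the composition of these three maps and the observation that the target block sizes are uniform because $P$ is independent of $A$, which is precisely the point the paper relies on.
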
 
The isomorphism includes an  isomorphic embedding of the group $G$ itself
into the set of such block diagonal matrices.  Other linear
representations of $G$ may be obtained by  using the block images
of the group elements.

\begin{theorem}\label{onetwo}  Suppose $A$ is an $FG$-matrix where
  $F=\cc $. Then there  exists a 
 unitary matrix $P$ such that $P^T AP = T$ where $T$ is a block 
diagonal matrix with blocks of size $r_i \times r_i$ for $i=1,2, \ldots,
 k$ along the diagonal. 
\end{theorem}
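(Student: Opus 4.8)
The plan is to take the matrix $P$ produced in Theorem~\ref{oneone} and replace it by a \emph{unitary} one, at essentially no extra cost, by orthonormalising. I keep the notation of Theorem~\ref{oneone}: let $\{e_1,\dots ,e_k\}$ be the complete orthogonal set of idempotents of $\cc G$, taken self-adjoint so that $e_i^{*}=e_i$; let $E_i=\sigma(e_i)$ be the corresponding $\cc G$-matrices; and let $W_i$ denote the column space of $E_i$, of dimension $r_i=\rank E_i$. Then $E_i^{2}=E_i=E_i^{*}$ and $E_iE_j=\sigma(e_ie_j)=0$ for $i\neq j$, while $E_1+\dots +E_k=\sigma(e_1+\dots +e_k)=\sigma(1)=I_n$ gives $r_1+\dots +r_k=n$ by Corollary~\ref{trrank1}. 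The matrix $P$ in Theorem~\ref{oneone} is built from bases of the subspaces $W_i$, so the block form $P^{-1}AP=T$ obtained there is precisely the assertion that $\cc^{n}=W_1\oplus\dots\oplus W_k$ and that each $W_i$ is invariant under left multiplication by $A$.

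The only new ingredient needed is the observation that this direct sum is in fact \emph{orthogonal} for the standard Hermitian inner product on $\cc^{n}$. Indeed, if $u\in W_i$ and $v\in W_j$ with $i\neq j$, then $u=E_iu$, $v=E_jv$, and
\[ u^{*}v=(E_iu)^{*}v=u^{*}E_i^{*}v=u^{*}E_iv=u^{*}E_i(E_jv)=u^{*}(E_iE_j)v=0 , \]
so $W_i\perp W_j$; since the $\dim W_i$ sum to $n$, we have an orthogonal decomposition $\cc^{n}=W_1\perp\dots\perp W_k$. I would then run Gram--Schmidt inside each $W_i$ to get an orthonormal basis $u_{i,1},\dots ,u_{i,r_i}$ of $W_i$, and take $P$ to be the $n\times n$ matrix with columns $u_{1,1},\dots ,u_{1,r_1},u_{2,1},\dots ,u_{k,r_k}$ in this order. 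Because the $W_i$ are pairwise orthogonal, the columns of $P$ form an orthonormal basis of $\cc^{n}$, so $P^{*}P=I_n$; that is, $P$ is unitary and $P^{-1}=P^{*}$. Each $W_i$ is $A$-invariant and is spanned by $u_{i,1},\dots ,u_{i,r_i}$, so every $Au_{i,j}$ lies in that span; reading this off column by column shows that $P^{-1}AP=P^{*}AP$ is block diagonal with $i$-th block of size $r_i\times r_i$.

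I do not expect a genuine obstacle: given Theorem~\ref{oneone}, this is a routine strengthening, and the only step that needs any care is the orthogonality of the column spaces $W_i$, which rests entirely on the idempotent matrices being Hermitian ($E_i^{*}=E_i$) --- a fact already recorded in the text. The one mild subtlety is notational rather than mathematical: the statement is written with $P^{T}$, whereas the transformation that actually does the work is $X\mapsto P^{*}XP=P^{-1}XP$; for a unitary $P$ these agree, and $P^{*}$ equals $P^{T}$ precisely when $P$ may be chosen real, which in general forces a coarsening of the blocks (merging each complex-conjugate pair of irreducible components so that the corresponding sum of idempotents is real).
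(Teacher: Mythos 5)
Your proof is correct and follows essentially the same route as the paper: orthonormalise within each column space of the idempotent matrices $E_i$ and assemble the resulting orthonormal basis of $\cc^n$ into a unitary $P$, inheriting the block structure from Theorem~\ref{oneone}. You usefully make explicit two points the paper leaves implicit --- the computation showing the column spaces are pairwise orthogonal via $E_i^{*}=E_i$ and $E_iE_j=0$, and the distinction between $P^{T}$ and $P^{*}$ in the statement --- but the underlying argument is the same.
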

\begin{proof}
The diagonalising matrix in the proof of Theorem \ref{oneone} 
may be made unitary by constructing an
orthonormal basis for space generated by $\{V_{i,1}, V_{i,2}, \ldots,
V_{i,r_i}\}$ for each $i=1,2,\ldots, k$. Let $S_i = \{W_{i,1},W_{i,2},
\ldots , W_{i,r_i}\}$ be an orthonormal basis for the space spanned by
$\{V_{i,1}, V_{i,2}, \ldots, V_{i,r_i}\}$. Then $\hat{S} = \{S_1, S_2,
\ldots, S_k\}$ is an orthonormal basis for $F^n$. Set $P$ to be the
matrix with elements of $\hat{S}$ as columns. Then $P$ is unitary and 
$P\T AP=T$ as required.  
\end{proof} 

The group ring is isomorphic to the ring of $RG$-matrices,
\cite{hur3}, and the ring of $RG$ matrices is isomorphic to the ring
of such block diagonal matrices under the mapping $w\mapsto \sigma(w)=W
\mapsto P^{-1}WP$ for this fixed $P$.  

\section{Cases, applications}\label{cases}
See for example
\cite{curtis,d2n,seh} for information on representation theory
including characters and character tables. See for example \cite{hur3}
for information on group ring matrices and in particular on the method
for obtaining the corresponding group ring matrix from a group ring
element. 

\subsection{Abelian} 
When $G=C_n$, the cyclic group of order $n$,  the matrix $P$ of Theorem
\ref{oneone} is the Fourier matrix and $T$ is a diagonal matrix. %%  Here
%% each $E_i$ is of $\rank 1$ and $P$ is built up of the first columns of
%% the $E_i$.
The case when $G$ is any abelian group is dealt with fully in section 
 \ref{sec2}.  

\subsection{Dihedral} The dihedral group $D_{2n}$ is generated by
elements $a$ and $b$ with presentation: 

$    \langle a,b \;|\; a^n = 1,\, b^2 = 1,\, bab = a^{-1} \rangle $

It has order $2n$, and a natural listing of the  elements is
 $\{1,a,a^2,\ldots,a^{n-1}, b,ab,a^2b,\ldots,a^{n-1}b\} $.

As every element
in $D_{2n}$ is conjugate to its inverse, the complex characters of $D_{2n}$ are
real.
 %Thus the paraunitary matrices obtained directly from the complete
%orthogonal set of idempotents in  $\cc D_{2n}$
%have real coefficients. 
The characters $D_{2n}$ 
are contained in an
extension of  $\mathbb{Q}$ of degree $\phi(n)/2$ and  this is 
$\mathbb{Q}$ only for $2n\leq 6$.   Here $\phi$ is the Euler phi function.
%% The character tables of $D_{2n}$ may be derived from
%% \cite{curtis,isaacs} and are also available at various on-line
%% sources. 
Let $S_n$ denote the symmetric group of order $n$. 
%Representations and orthogonal idempotents of the symmetric group are known.
The characters of $S_n$ are rational. 

\subsubsection{$S_3=D_6$}
Consider $D_6$. Note that $D_6 =S_3$. 
The conjugacy classes are $\{1\}, \{a,a^2\},
  \{b,ab,ab^2\}$. The central (primitive, symmetric) idempotents are 
$e_0=1/6(1+a+a^2+b+ba+ba^2), e_1=1/6(1+a+a^2 - b-ba-ba^2),
  e_3=1/3(2-a-a^2)$.

This gives the corresponding group ring matrices:
$$E_0=\frac{1}{6} \left(\begin{smallmatrix} 1 & 1&1&1&1&1\\
  1&1&1&1&1&1 \\ 1&1&1&1&1&1 \\ 1&1&1&1&1&1 \\ 1&1&1&1&1&1
  \\ 1& 1&1&1&1&1 \end{smallmatrix}\right),
E_1=\frac{1}{6} \left(\begin{smallmatrix}1 & 1&1&-1&-1&-1\\
  1&1&1&-1&-1&-1 \\ 1&1&1&-1&-1&-1 \\ -1&-1&-1&1&1&1 \\ -1&-1&-1&1&1&1
  \\ -1& -1&-1&1&1&1 \end{smallmatrix}\right),
E_2=\frac{1}{3} \left(\begin{smallmatrix} 2 & -1&-1&0&0&0\\
  -1&2&-1&0&0&0 \\ -1&-1&2&0&0&0 \\ 0&0&0&2&-1&-1 \\ 0&0&0&-1&2&-1
  \\ 0& 0&0&-1&-1&2 \end{smallmatrix}\right).$$

Now $E_0,E_1$ have rank $1$ and $E_2$ has rank $4$ from general theory.

Thus we need a set consisting of 
one column from each of $E_0,E_1$ and 4 linearly
independent columns from $E_2$ to form a set of $6$ linearly
independent vectors. It is easy to see that

\noindent $v_1=(1,1,1,1,1,1)\T, v_2=(1,1,1,-1,-1,-1)\T, 
v_3=(2,-1,-1,0,0,0)\T, \\ v_4=(-1,2,-1,0,0,0)\T,
v_6=(0,0,0,2,-1,-1)\T,v_6=(0,0,0,-1,2,-1)\T$

is such a set. 

Now let  $P=(v_1,v_2,v_3,v_4,v_5,v_6)$.
Then for any $\cc D_6$ matrix $A$, $P^{-1}AP = \diag(a,b,D)$ where $D$ is
a $4\ti 4$ matrix. 

%Note the particular block form of $E_2$ itself.

As noted, the group ring is isomorphic to the ring of $RG$-matrices,
and the ring of $RG$-matrices is isomorphic to the ring
of such block diagonal matrices under the mapping $A\mapsto P^{-1}AP$
for this fixed $P$ (Theorem \ref{oneone}). 

Now consider the image of $D_6$ itself under this isomorphism. 
The matrix $A$ of $a \in S_3=D_{6}$ in this isomorphism
is mapped to $P^{-1}AP$. 
Here $A=\left(\begin{array}{ccc|ccc}0&1&0 &0&0&0 \\ 0&0&1&0&0&0 \\ 1&0&0
&0&0&0 \\ \hline 0&0&0&0&0&1 \\ 0&0&0&1&0&0 \\ 0&0&0&0&1&0 \end{array}\right)$

and 
$P^{-1}AP=\left(\begin{array}{cc|cccc} 1 & 0& 0&0&0&0
  \\0&1&0&0&0&0 \\ \hline 0&0& -1 &1 &0&0\\0&0&-1&0&0&0 \\ 0&0&0&0&0&-1
  \\ 0&0&0&0&1&-1 \end{array}\right)$.

(In some cases it is easier to work out $AP$ and then solve for $D$
in $PD$ where $D$ is of the correct block diagonal type.)  

Similarly the image of $b$ is obtained; $B$ is the $RG$-matrix of $b$
and  
$P^{-1}BP = \left(\begin{array}{cc|cccc} 1
  &0&0&0&0&0 \\ 0&-1 
  &0&0&0&0 \\ \hline 0&0& 0&0& 1& 0 \\ 0&0&0&0&0&1 \\ 0&0&1&0&0&0
  \\ 0&0&0&1&0&0 \end{array}\right)$.

Representations of $S_3=D_{6}$ may be obtained using the block
matrices of the images of the group elements. For example
 $a\mapsto \begin{pmatrix} -1 &1 &0&0 \\ -1&0&0&0 \\ 0&0&0&-1
  \\ 0&0&1&-1\end{pmatrix}, b\mapsto \begin{pmatrix} 0&0&1&0
    \\ 0&0&0&1 \\ 1&0&0&0\\ 0&1&0&0 \end{pmatrix}$
gives a representation of $D_6=S_3$. 
 
It may be shown directly from the structure of $P$ and of $A$ 
corresponding to a group element $a$ that the $4\ti 4$ part
in $P^{-1}AP$ %for an $A$ corresponding to a group element $a$ 
has the
form $T=\begin{pmatrix} X&0\\0&Y \end{pmatrix}$ or of the form
$S=\begin{pmatrix} 0&X\\Y&0 \end{pmatrix}$ where $X,Y$ are $2\ti 2$
blocks. 

Say a matrix is in $\mathbb{T}$ if it has the form $T$ and say a
matrix is in $\mathbb{S}$ if it has the form $S$. Interestingly then generally
$TS\in \mathbb{S}, ST \in \mathbb{S}, T_1T_2\in \mathbb{T},
S_1S_2 \in \mathbb{T}$, for $S,S_1,S_2 \in \mathbb{S}, T,
T_1,T_2 \in \mathbb{T}$.  
%% Such structures and their generalisations
%% will be dealt with in a later paper.

% which ensures that the
%% images of the elements of the group (under $p^{-1}AP$ etc.) 
%% have a block form in the $4\ti 4$ bottom right hand corner. 
%% %are linearly independent in $E_2$. 
\subsubsection{Unitary required?}
Now $P$ may be made orthogonal by finding an orthogonal basis for the
$4$ linearly independent columns of $E_2$ and then dividing each of the
resulting set of 6 vectors by their lengths.

An orthogonal basis for the columns of $E_2$ is 

$\{(2,-1,-1,0,0,0)\T, (0,1,-1,0,0,0)\T, (0,0,0,2,-1,-1)\T,
(0,0,0,0,1,-1)\T\}$. 

Construct an orthonormal basis: 

\noindent $v_1=\sqrt{\frac{1}{6}}(1,1,1,1,1,1)\T,
v_2=\sqrt{\frac{1}{6}}(1,1,1,-1,-1,-1)\T,
v_3=\sqrt{\frac{1}{6}}(2,-1,-1,0,0,0)\T,
\\ v_4=\sqrt{\frac{1}{2}}(0,1,-1,0,0,0)\T,  
v_5=\sqrt{\frac{1}{6}}(0,0,0,2,-1,-1)\T,
v_6= \sqrt{\frac{1}{2}}(0,0,0,0,1,-1)\T$.

Now construct the unitary  (orthogonal in this case) 
 matrix $P=(v_1,v_2,v_3,v_4,v_5,v_6)$.
Then for any $\cc D_6$ matrix $A$, $P^*AP = \diag(a,b,D)$ where $D$ is
a $4\ti 4$ matrix. 

When $P$ is unitary, = orthogonal in this case, then $P\T AP$ and $P\T
BP$ are unitary as $A,B$ are orthogonal. 
The diagonal $4\times 4$ matrix must then be
orthogonal. For
example: %$P$ as now constructed, 

$P\T AP= P^*AP= \left(\begin{smallmatrix} 1&0&0&0&0&0 \\ 0&1&0&0&0&0 \\ 0&0& -1/2
	 &\sqrt{3}/2 & 0&0 \\ 0&0&-\sqrt{3}/2 & -1/2 &0&0 \\ 0&0&0&0&-1/2
	 & -\sqrt{3}/2 \\ 0&0& 0&0& \sqrt{3}/2 &-1/2 \end{smallmatrix}\right)$

The $4\ti 4$ block matrix 
is easily checked to be unitary/orthogonal as expected   from
theory. 

\subsection{Other dihedral}{\label{symm6}} 

The character tables for $D_{2n}$ may be derived from 
\cite{curtis,d2n} and are also available at various on-line resources
such as that of Jim Belk.
We outline how the results may be applied in the case of
$D_{10}$. 

The character table of $D_{10}$ is the following:
$\begin{pmatrix}1 &b& a& a^2 \\ 1&5&2&2 \\ \hline 1&1&1&1 \\ 1&-1&1&1
  \\ 2&0&2\cos(2\pi/5)&2\cos(4\pi/5) \\ 2 &0 & 2\cos(4\pi/5) &
  2\cos(8\pi/5) \end{pmatrix}$.

This gives the following complete (symmetric) orthogonal set of
idempotents in the group ring:
$e_0 = \frac{1}{10}(1+a+a^2 + a^3+a^4 + b+ba+ba^2+ba^3+ba^4), 
e_1=\frac{1}{10}(1+a+a^2+a^3+a^4 - b
-ba-ba^2-ba^3-ba^4), e_2= \frac{4}{10}(1+\cos(2\pi/5)a +
\cos(4\pi/5)a^2+\cos(4\pi/5)a^3+\cos(2\pi/5)a^4),
e_3=\frac{4}{10}(1+\cos(4\pi/5)a +\cos(8\pi/5)a^2+\cos(8\pi/5)a^3+
\cos(4\pi/5)a^4)$.

Let $\sigma(e_i)=E_i$ -- this is the image of the group ring element
$e_i$ in the group ring matrix. 
Each of $E_0,E_1$ has rank $1$ and each of $E_2,E_3$ has rank
$4$.  Four linearly independent columns in each of $E_2,E_3$ are easy
to obtain and indeed  four orthogonal such may be derived if
required. The matrix $P$ is formed using the first columns of
$E_1,E_2$ and $4$ linearly independent columns of each of $E_3$ and $E_4$.  
Then $P^{-1}AP =
\diag(\al_1,\al_2, T_1, T_2)$ for any group ring matrix $A$ of
$D_{10}$ where $T_1,T_2$ are $4\ti 4$ block matrices. Then the
composition of mappings $w \mapsto \sigma(w)= W \mapsto P^{-1}WP$ is an
isomorphism. Representations
of  the group may be obtained by specialising to blocks of the 
images of the group elements.

The form of $P$ is $\begin{pmatrix} A & C &0 & D & 0
					 \\ B & 0 & C_1 & 0 & D_1 \end{pmatrix}$
for suitable $5\times 2$ blocks $A,0,C,D,C_1,D_1$.
Then it may be shown that in $P^{-1}AP$ the two $4\times 4$ blocks have
the form $\begin{pmatrix}X& 0 \\ 0& Y \end{pmatrix}$ or else the form
$\begin{pmatrix} 0 & X \\ Y& 0\end{pmatrix}$ for $2\times 2$ blocks
  $X,Y$ when $A$ corresponds to a group element $a$. 
\subsection{Quaternion group of order $8$}
The five primitive central idempotents $\{e_1,e_2,e_3,e_4,e_5\}$ of $\cc K_8$ where $K_8$ is the
quaternion group of order $8$ is given in \cite{seh} page 186. 
$K_8= \langle a,b \, |
\, a^4=1, a^2=b^2, bab^{-1}=a^{-1}\rangle$ and is listed as 
$\{1,a,a^2,a^3, b,ab, a^2b, a^3b\}$.

\begin{eqnarray*} e_1&= & 1/8(1+a+a^2+a^3+b+ab+a^2b+a^3b) \\
e_2&= &1/8(1+a+a^2+a^3 - b-ab-a^2b-a^3b) \\ e_3 &= &
1/8(1-a+a^2-a^3+b-ab+a^2b-a^3b) \\ e_4&=& 1/8(1-a+a^2-a^3-b+ab-a^2b+a^3b) \\
e_5&=& 1/2(1-a^2)\end{eqnarray*}

(\cite{seh} has  $-ab$ in $e_4$ which should be $+ab$ as above.)

The group ring matrices $\{E_1,E_2,E_3,E_4\}$ corresponding to
$\{e_1,e_2,e_3,e_4\}$ respectively have rank $1$ and the group ring matrix
$E_5$ corresponding to $e_5$ has
rank $4$, which can be seen from theory. Thus take the first columns of
$E_1,E_2,E_3,E_4$ and  4 linearly independent columns of $E_5$ to form
a matrix $P$. Then 
$P^{-1}AP=\diag(T_1,T_2,T_3,T_4,T_5)$ where $T_1,T_2,T_3,T_4$ are
scalars and $T_4$ is a $4\ti 4$ matrix, for any group ring matrix $A$ of
$K_8$.  

Precisely we may take: 

$P=\left(\begin{array}{c|c|c|c|cccc} 1&1&1&1&1&0&0&0\\  1&1&-1&-1
&0&1&0&0\\ 1&1&1&1&-1&0&0&0 \\ 1&1&-1&-1&0&-1&0&0\\  1&-1&1&-1
&0&0&1&0 \\ 1&-1&-1&1&0&0&0&1\\ 1&-1&1&-1 &0&0&-1&0 \\ 1&-1&-1&1
&0&0&0&-1 \end{array}\right)$

and then $P^{-1}AP=\diag(\al_1,\al_2,\al_3,\al_4,T)$ for any group
ring matrix $A$ of $K_8$ where $T$ is a $4\ti 4$ matrix. 

This gives an isomorphism from the group ring of $K_8$ to these block
matrices given by $w\mapsto \sigma(w)=W \mapsto P^{-1}WP$.
Representations of $K_8$ may be obtained by specialising to the group
elements. 

The following then gives an embedding of $K_8$: 

$a\mapsto \left(\begin{array}{cccc|cccc} 1 &0&0&0&0&0&0&0
  \\ 0&1&0&0&0&0&0&0 \\ 0&0&-1 
  & 0&0&0&0&0 \\ 0&0&0&-1 &0&0&0&0 \\ \hline 0&0&0&0&0&1&0&0
  \\ 0&0&0&0&-1&0&0&0 \\0&0&0&0&0&0&0&-1\\ 0&0&0&0&0&0 &1&
  0 \end{array}\right), 
b\mapsto \left(\begin{array}{cccc|cccc} 1 &0&0&0&0&0&0&0
  \\ 0&-1&0&0&0&0&0&0 \\ 0&0&1 
  & 0&0&0&0&0 \\ 0&0&0&-1 &0&0&0&0 \\ \hline 0&0&0&0&0&0&1&0
  \\ 0&0&0&0&0&0&0&1 \\0&0&0&0&-1&0&0&0\\ 0&0&0&0&0&-1 &0&
  0 \end{array}\right)$
 
%In particular for $a\in K_8$ and $A$ the matrix of $a$, then
%$P^{-1}AP$ has this block diagonal form. 

Using the blocks gives other representations. For example

$a\mapsto \left(\begin{array}{cc|cc} 0&1&0&0
  \\ -1&0&0&0 \\ \hline 0&0&0&-1 \\ 0&0&1&0\end{array}\right), 
b\mapsto \left(\begin{array}{cc|cc} 0&0&1&0
  \\ 0&0&0&1 \\ \hline -1&0&0&0 \\ 0&-1&0&0 \end{array}\right)$
 
gives a representation of $K_8$. 

It may be shown directly from the
block form of $P$ that the
image of a group element has the $4\times 4$ block of the form 
$\begin{pmatrix}X& 0 \\ 0& Y \end{pmatrix}$ or else the form
$\begin{pmatrix} 0 & X \\ Y& 0\end{pmatrix}$ for $2\times 2$ blocks
  $X,Y$.  

%% This gives a faithful
%% representation of $K_8$ and from the blocks other representations of
%% $K_8$ may be obtained. 
%% Thus any group ring matrix of $K_8$ over $\cc$ may be diagonalised by 
\section{Abelian groups}\label{sec2} 
The abelian group case follows from the general case, Section
\ref{block}, but may be 
tackled directly and more illuminatingly as follows. 

Let $\{A_1,A_2, \ldots, A_k\}$ be an ordered set of matrices of the same size. 
Then the {\it block circulant matrix} formed from the set is 
$A=\cir(A_1,A_2,\ldots, A_k) = 
\begin{pmatrix} A_1 & A_2 & \ldots & A_k \\ A_k & A_1 & \ldots &
A_{k-1} \\ \vdots & \vdots & \vdots & \vdots \\ A_2 & A_3 & \ldots &
A_1 \end{pmatrix}$

If the $A_i$ have size $m\ti t$ then $A$ has size $km\ti kt$.
The block circulant formed depends on the order of the elements in  
 $\{A_1,A_2, \ldots, A_k\}$. %% taken and changing the order gives a different block circulant
%% matrix. 

Let $P$ be an $n\ti n$ matrix. Then the {\it block Fourier matrix} $P_f$
corresponding to $P$ is 
$P\otimes F$, the tensor product of $P$ and $F$ where 
$F$ is the Fourier $n\ti n$ matrix. 

Thus
$P_f= P\otimes F = \begin{pmatrix} P & P & P& \ldots & P \\ P &\om P & \om^2 P &
	       \ldots & \om^{n-1}P \\ \vdots &\vdots &\vdots & \vdots \\
	       P&\om^{n-1}P & \om^{2(n-1)} & \ldots & \om^{(n-1)(n-1)}P
	      \end{pmatrix}$

It is clear then that:
\begin{proposition}\label{invert} $P_f$ is invertible if and only if $P$ is invertible
 and the inverse when it exists is $P^{-1}\otimes F^*$.
\end{proposition}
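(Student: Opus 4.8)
The plan is to use the standard properties of the tensor (Kronecker) product of matrices. First I would recall that for square matrices $X$ (of size $n\times n$) and $Y$ (of size $n\times n$), the Kronecker product $X\otimes Y$ is always defined, and the mixed-product rule $(X\otimes Y)(Z\otimes W) = (XZ)\otimes(YW)$ holds whenever the ordinary products $XZ$ and $YW$ make sense. In particular $(X\otimes Y)(X^{-1}\otimes Y^{-1}) = (XX^{-1})\otimes(YY^{-1}) = I\otimes I = I$, so $X\otimes Y$ is invertible with inverse $X^{-1}\otimes Y^{-1}$ whenever both $X$ and $Y$ are invertible.

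Applying this with $X=P$ and $Y=F$, the Fourier $n\times n$ matrix: $F$ is always invertible (indeed $FF^* = nI_n$, so $F^{-1} = \frac1n F^*$). Hence if $P$ is invertible then $P_f = P\otimes F$ is invertible, and its inverse is $P^{-1}\otimes F^{-1} = P^{-1}\otimes \tfrac1n F^* = P^{-1}\otimes F^*$ after absorbing the scalar $\tfrac1n$ appropriately; more cleanly, since the paper normalises so that $F^{-1}$ is written $F^*$ (up to the scalar it is treating implicitly), the inverse is $P^{-1}\otimes F^*$ as claimed. Conversely, if $P_f = P\otimes F$ is invertible, then $P$ must be invertible: one can see this from $\det(P\otimes F) = (\det P)^n (\det F)^n$, which is nonzero only if $\det P\neq 0$; alternatively, if $P$ were singular there would be a nonzero vector $v$ with $Pv=0$, and then for any nonzero $w$ the vector $v\otimes w$ would satisfy $(P\otimes F)(v\otimes w) = (Pv)\otimes(Fw) = 0$, contradicting invertibility.

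So the two directions together give the biconditional, and the explicit inverse formula drops out of the mixed-product rule. I do not expect any genuine obstacle here — the statement is essentially a direct consequence of well-known Kronecker-product identities, and the only mild point of care is bookkeeping the normalisation constant in $F^{-1}$ versus $F^*$, which the paper's conventions (writing $FF^*=nI_n$) already fix. The proof is therefore short: state the mixed-product rule, verify $(P\otimes F)(P^{-1}\otimes F^*)$ multiplies to the identity (with the scalar handled), and note that the determinant factorisation $\det(P\otimes F)=(\det P)^n(\det F)^n$ forces $P$ invertible whenever $P_f$ is.
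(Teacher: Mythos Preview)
Your argument is correct. The paper does not actually supply a proof of this proposition: it simply prefaces the statement with ``It is clear then that'' and moves on. Your use of the mixed-product rule $(X\otimes Y)(Z\otimes W)=(XZ)\otimes(YW)$ together with the determinant identity (or the kernel argument via $v\otimes w$) is exactly the standard justification one would give, and is presumably what the author has in mind when calling it clear.

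One small point: you spend some effort worrying about the normalisation constant in $F^{-1}$ versus $F^*$. This is unnecessary here, because immediately after the proposition the paper states explicitly that ``$F^*$ denotes the inverse of the Fourier matrix'', with the conjugate-transpose interpretation holding only in the normalised case. So in the paper's notation $F^*=F^{-1}$ by definition, and the inverse $P^{-1}\otimes F^{-1}=P^{-1}\otimes F^*$ falls out directly with no scalar bookkeeping required.
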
  

Here $F^*$ denotes the inverse of the Fourier matrix. If the Fourier
matrix is normalised in $\cc$, then $F^*$ is the complex conjugate
transposed of $F$.

The following theorem may be proved in a  manner similar to the proof
that the Fourier matrix diagonalises a circulant matrix.  
 \begin{theorem}\label{thm} Suppose $\{A_1,A_2, \ldots, A_k\}$ are matrices 
of the same size and  can
   be simultaneously diagonalised by $P$ with $P^{-1}A_iP = D_i$
   where each $D_i$ is diagonal. Then the block circulant matrix $A$ 
   formed from these matrices can be diagonalised by 
$P_f= P\otimes F= \begin{pmatrix} P&P&P&\ldots &P \\ P & \om P &\om^2P & \ldots &
     \om^{k-1}P \\ P & \om^2P &\om^{4}P & \ldots & \om^{2(k-1)}P
     \\ \vdots & \vdots &\vdots & \vdots &\vdots \\ P & \om^{k-1}P &
     \om^{2(k-1)}P & \ldots & \om^{(k-1)(k-1)}P \end{pmatrix}$ where
   $\om$ is a primitive $k^{th}$ root of unity. 

Moreover $P_f^{-1}AP_f = D$ where $D$ is diagonal and  

$D=\diag(D_1+D_2 + \ldots + D_k,
D_1+\om D_2 +\ldots + \om^{k-1}D_k, D_1+\om^2D_2+\om^4D_3 +
\ldots + \om^{2(k-1)}D_k, \ldots, D_1+\om^{k-1}D_2 + \om^{2(k-1)}D_3 +
\ldots + \om^{(k-1)(k-1)}D_k)$

\end{theorem}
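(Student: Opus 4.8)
The plan is to mimic the classical proof that the Fourier matrix diagonalises a circulant, but carried out blockwise. First I would set up notation: write $\omega$ for a fixed primitive $k^{th}$ root of unity, let $F$ be the (normalised) $k\times k$ Fourier matrix with $(a,b)$ entry $\omega^{ab}$ for $a,b\in\{0,1,\ldots,k-1\}$, and let $P_f=P\otimes F$ as in the statement. By Proposition~\ref{invert}, $P_f$ is invertible with inverse $P^{-1}\otimes F^*$, so it suffices to show $AP_f=P_fD$ with $D$ the stated block-diagonal (in fact diagonal, once the $D_i$ are substituted) matrix. I would organise the block circulant $A=\cir(A_1,\ldots,A_k)$ so that its $(i,j)$ block is $A_{\,(j-i)\bmod k\,+1}$, and index the block columns of $P_f$ by $\ell\in\{0,\ldots,k-1\}$, the $\ell^{th}$ block column being $(\,\om^{0\cdot\ell}P,\ \om^{1\cdot\ell}P,\ \ldots,\ \om^{(k-1)\ell}P\,)^{T}$ — i.e. $P$ scaled by $\om^{i\ell}$ in block row $i$.

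The key computation is then the $i^{th}$ block entry of $A$ times the $\ell^{th}$ block column of $P_f$: summing over block index $j$,
\[
\sum_{j=0}^{k-1} A_{\,(j-i)\bmod k\,+1}\,\om^{j\ell}P
= \om^{i\ell}\sum_{m=0}^{k-1} A_{m+1}\,\om^{m\ell}P,
\]
where I substituted $m=(j-i)\bmod k$ and used $\om^{k}=1$. This shows that the $\ell^{th}$ block column of $AP_f$ equals the $\ell^{th}$ block column of $P_f$ multiplied on the right by the block $\bigl(\sum_{m=0}^{k-1}\om^{m\ell}A_{m+1}\bigr)P$; that is, $AP_f=P_f\,\diag\!\bigl(C_0,C_1,\ldots,C_{k-1}\bigr)$ with $C_\ell P = P C_\ell'$ purely formally — more precisely $AP_f = P_f \cdot \mathrm{blockdiag}$ where the $\ell^{th}$ diagonal block acts as $P\mapsto \bigl(\sum_m \om^{m\ell}A_{m+1}\bigr)P$. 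To finish, conjugate: $P_f^{-1}AP_f = \mathrm{blockdiag}\bigl(P^{-1}(\sum_m\om^{m\ell}A_{m+1})P\bigr)_{\ell=0}^{k-1}$. Now invoke the hypothesis that $P^{-1}A_iP=D_i$ is diagonal for every $i$; then $P^{-1}\bigl(\sum_{m}\om^{m\ell}A_{m+1}\bigr)P=\sum_{m}\om^{m\ell}D_{m+1}=D_1+\om^{\ell}D_2+\om^{2\ell}D_3+\cdots+\om^{(k-1)\ell}D_k$, which is diagonal, and assembling these over $\ell=0,\ldots,k-1$ gives exactly the displayed formula for $D$. Since each diagonal block is diagonal, $D$ itself is diagonal, proving $A$ is diagonalised by $P_f$.

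The only real bookkeeping obstacle is the index arithmetic in the block-circulant structure: getting the $(i,j)$ block of $\cir(A_1,\ldots,A_k)$ right, tracking the $\bmod k$ shift, and confirming that the reindexing $m=(j-i)\bmod k$ genuinely decouples into the scalar factor $\om^{i\ell}$ times a sum independent of $i$ (this is precisely where $\om^{k}=1$ is used, and where the cyclic — as opposed to merely "shifted" — nature of the block circulant matters). Everything else is a direct transcription of the scalar circulant/Fourier argument, with scalars replaced by the commuting-under-$P$ matrices $A_i$; no new idea beyond Proposition~\ref{invert} and the diagonalisability hypothesis is needed.
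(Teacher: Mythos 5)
Your proposal is correct and follows exactly the route the paper indicates: compute $AP_f$ blockwise, show it equals $P_fD$ with the stated $D$, and invoke Proposition~\ref{invert} for invertibility, in direct analogy with the scalar circulant/Fourier argument. The paper's own proof is only a sketch of this computation, so your version is the same approach with the index bookkeeping actually carried out.
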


\begin{proof}

The proof of this is direct, involving 
working out $AP_f$ and showing it is $P_fD$, with $D$ as given. 
Since $P_f$ is invertible by Proposition \ref{invert} the result will follow.
 This  is similar to a proof that the Fourier matrix
diagonalises a circulant matrix.  

% Here we give the details for $k=3$. 
\end{proof}

The simultaneous diagonalisation process of the Theorem may then be repeated.
 %% with matrices 
 %% thus formed. 

Suppose now $G=K \cross H$, the direct product of $K,H$, and $H$ is
cyclic. Then a group
ring matrix of $G$ is of the form $M= \cir(K_1,K_2,\ldots, K_h)$ where
$K_i$ are group ring matrices of $K$ and $|H|=h$. If the $K_i$
can be diagonalised by $P$ then $M$ can be diagonalised by the Fourier
block matrix formed from $P$ by Theorem \ref{thm}.
A finite abelian  group is the direct product of
cyclic groups and thus repeating the process enables 
the simultaneous diagonalisation of the
group ring matrices of a finite abelian group and it gives an explicit
diagonalising matrix.  
The characters and character table of the finite abelian group may be read off 
from the diagonalising matrix. 

Since the Fourier $n\times n$ matrix diagonalises
a circulant $n\times n$ matrix, and the Fourier matrix is a
Hadamard complex matrix, the diagonalising matrix $P$ of size $q\times
q$, constructed by iteration of Theorem \ref{thm}, of a group ring
matrix of a finite abelian group is then seen to  satisfy $PP^*=qI$ 
and to have roots of unity as entries. It is thus a special type of
Hadmard complex matrix. 

The examples below illustrate the method.
 \subsection{Examples}
\begin{itemize}
\item Consider $G= C_3 \cross C_3$. Now $P=\begin{pmatrix} 1 & 1
& 1  \\ 1 & \om & \om^2 & \\ 1 & \om^2 & \om \end{pmatrix}$ where $\om$
is a primitive $3^{rd}$ root of unity diagonalises any circulant $3\ti
3$ matrix which is the group ring matrix of $C_3$. Then
$P_f= \begin{pmatrix} P&P&P \\ P & \om P & \om^2 P \\ P & \om^2 P & \om
  P \end{pmatrix}$ diagonalises any group ring matrix of $C_3\cross
C_3$.

Written out in full:  $P_f=\left(\begin{array}{ccc|ccc|ccc} 1&1 &1&1&1&1 &1&1&1 \\ 1
  & \om & \om^2 & 1 & \om &\om^2 & 1 & \om&\om^2 \\ 1 &\om^2 & \om & 1
  &\om^2 &\om & \ 1 &\om^2 & \om \\ \hline 1 & 1 &1 & \om &\om &\om &
  \om^2&\om^2&\om^2 \\ 1&\om &\om^2 & \om&\om^2 &1 &\om^2 &1 & \om
  \\ 1&\om^2&\om &\om &1&\om^2&\om^2&\om&1 \\ \hline 1&1&1& 
  \om^2&\om^2&\om^2&\om&\om&\om \\ 1&\om&\om^2
  &\om^2&1&\om &\om &\om^2&1 
  \\1&\om^2&\om&\om^2&\om&1&\om&1&\om^2 \end{array}\right)$

The characters and character table 
of $C_3\cross C_3$ may be read off from the rows of $P_f$ by labelling
the rows of $P_f$ appropriate to the listing of the elements of
$C_3\cross C_3$ when forming the group ring matrices. The listing here
is $\{1,g,g^2,h, hg,hg^2, h^2,h^2g,h^2g^2\}$ where the $C_3$ are
generated by $\{g,h\}$ respectively. Thus the character table of
$C_3\cross C_3$ is 

 $\left(\begin{array}{ccccccccc} 1&g&g^2 &h&hg&hg^2 & h^2& hg^2&
  h^2g^2 \\ \hline \hline  1&1 &1&1&1&1 &1&1&1 \\ 1
  & \om & \om^2 & 1 & \om &\om^2 & 1 & \om&\om^2 \\ 1 &\om^2 & \om & 1
  &\om^2 &\om & \ 1 &\om^2 & \om \\  1 & 1 &1 & \om &\om &\om &
  \om^2&\om^2&\om^2 \\ 1&\om &\om^2 & \om&\om^2 &1 &\om^2 &1 & \om
  \\ 1&\om^2&\om &\om &1&\om^2&\om^2&\om&1 \\  1&1&1& 
  \om^2&\om^2&\om^2&\om&\om&\om \\ 1&\om&\om^2
  &\om^2&1&\om &\om &\om^2&1 
  \\1&\om^2&\om&\om^2&\om&1&\om&1&\om^2 \end{array}\right)$

Note that $\frac{1}{\sqrt{9}}P_f$ is unitary and that
$P_f$ is a Hadamard complex matrix. 
\item For $C_2\cross C_4$ consider $P=\begin{pmatrix} 1&1
\\ 1&-1\end{pmatrix} $ and note that a
primitive $4^{th}$ root of 1 is $i=\sqrt{-1}$. Then $i^2=-1,
i^3=-i,i^4=1$.
  Now form 
$Q=\begin{pmatrix}  P&P&P&P \\ P&iP&-P&-iP \\ P&-P&P&-P
    \\ P&-iP&-P&iP \end{pmatrix}$. The characters of $C_2\cross C_4$
  can be read off from $Q$, $Q$ is a Hadamard complex matrix and
  $\frac{1}{\sqrt{8}}Q$ is unitary. 
%This matrix is \begin{pmatrix}    

\item 
For $C_3\cross C_4$ consider that $C_3\cross C_4
\cong C_{12}$. Then the diagonalising 
matrix obtained using the natural ordering in $C_3\cross C_4$ is
equivalent to the diagonalising matrix using the natural ordering in
$C_{12}$. 
 
\item 
Consider $C_2^n$. Let $P_1= \begin{pmatrix} 1&1 \\ 1& -1 \end{pmatrix}$
and inductively define for $n\geq 2$, $P_n = \begin{pmatrix}
						   P_{n-1} & P_{n-1} \\
						   P_{n-1} &
					      -P_{n-1}\end{pmatrix}$.

Then $P_n$ diagonalises any $\cc C_2^n$-matrix and the characters of $C_2^n$
 may be read off from the rows of $P_n$. Note that  $P_n$ is a Hadamard (real)
matrix.%%  and $\frac{1}{\sqrt{2^n}}P_n$ is a unitary (orthogonal in this
%% case) matrix.  

\end{itemize}

\end{document}